\documentclass{amsart}%
\usepackage{amssymb}
\usepackage{amsfonts}
\usepackage{geometry}%
\usepackage{amsmath}%
\usepackage{graphicx}
\providecommand{\U}[1]{\protect\rule{.1in}{.1in}}
\newtheorem{theorem}{Theorem}
\theoremstyle{plain}

\newtheorem{definition}[theorem]{Definition}

\newtheorem{lemma}[theorem]{Lemma}

\numberwithin{equation}{section}
\begin{document}
\title[Schr\"{o}dinger equation]{Probabilistic Fourier extension and the Schr\"{o}dinger equation}
\author{Lyudmila Korobenko}
\address{Reed College\\
Portland, Oregon, USA}
\email{lkoroben@math.mcmaster.ca}
\author[E. T. Sawyer]{Eric T. Sawyer$^{\dagger}$}
\address{Eric T. Sawyer, Department of Mathematics and Statistics\\
McMaster University\\
1280 Main Street West\\
Hamilton, Ontario L8S 4K1 Canada}
\email{sawyer@mcmaster.ca}
\thanks{$\dagger$ Research supported in part by a grant from the National Science and
Engineering Research Council of Canada.}
\date{\today }

\begin{abstract}
The probabilistic Fourier extension theorem for the paraboloid in $d$
dimensions (see \cite{Saw7}\ for the sphere and the proof there holds for a
smooth compact piece of the paraboloid) shows that%
\[
\mathbb{E}_{\pm}\left\Vert \mathcal{E}_{S}f_{\pm}\right\Vert _{L^{p}\left(
\mathbb{R}^{d}\right)  }\leq C\left\Vert f\right\Vert _{L^{p}\left(  U\right)
}\ ,\ \ \ \ \ \left(  p,q\right)  \in\Gamma_{\operatorname*{Knapp}},
\]
where $\mathcal{E}_{S}f_{\alpha}\left(  \xi\right)  \equiv\int_{U}%
e^{-i\Phi\left(  x\right)  \cdot\xi}f_{\alpha}\left(  x\right)  dx$,
$\Phi\left(  x\right)  =\left(  \sqrt{1-\left\vert x\right\vert ^{2}%
},x\right)  $, $f_{\pm}$ is a `martingale transform' of $f$, and
\[
\Gamma_{\operatorname*{Knapp}}\equiv\left\{  \left(  p,q\right)  :q=\frac
{n+1}{n-1}p^{\prime},2\leq p<\frac{2n}{n-1}\right\}  .
\]

We give applications of this theorem to the global initial value problem for
the Schr\"{o}dinger equation,%
\[
\left\{
\begin{array}
[c]{cccc}%
i\frac{\partial u}{\partial t}\left(  t,x\right)  -\bigtriangleup_{x}u\left(
t,x\right)  & = & 0 & \left(  t,x\right)  \in\mathbb{R}\times\mathbb{R}%
^{d-1}\\
u\left(  0,x\right)  & = & f\left(  x\right)  & x\in\mathbb{R}^{d-1}%
\end{array}
\right.  ,
\]
in which we obtain sharp averaged Sobolev regularity results over certain
`martingale transforms' of the intial data $f$. The deterministic analogue in
three dimensions has been obtained recently by OpenAI \cite{Ope}.

\end{abstract}
\maketitle
\tableofcontents

\section{Introduction}

The Fourier extension conjecture in $d\geq2$ dimensions arose from unpublished
work of E. Stein in 1967, see e.g. \cite[see the Notes at the end of Chapter
IX, p. 432, where Stein proved the restriction conjecture for $1\leq
p<\frac{4d}{3d+1}$]{Ste2} and \cite{Ste},%
\begin{equation}
\left\Vert \widehat{f\sigma_{d-1}}\right\Vert _{L^{q}\left(  \mathbb{R}%
^{d}\right)  }\leq C_{p}\left\Vert f\right\Vert _{L^{p}\left(  \sigma
_{d-1}\right)  },\ \text{for }f\in L^{p}\left(  \sigma_{d-1}\right)
,\label{extension}%
\end{equation}
for%
\begin{align}
& \left(  \frac{1}{p},\frac{1}{q}\right)  \in\left[  0,1\right]  ^{2}\text{
lying strictly below the line }\frac{1}{q}=\frac{d-1}{2d}%
,\label{Knapp conditions}\\
& \text{and on or below the Knapp line through }\left(  \frac{d-1}{2d}%
,\frac{d-1}{2d}\right)  \text{ and }\left(  1,0\right)  ,\nonumber
\end{align}
and where $\sigma_{d-1}$ is surface measure on $S$ and where $S$ is either the
sphere $\mathbb{S}^{d-1}$ or a smooth compactly supported measure on the
paraboloid $\mathbb{P}^{d-1}$. The case $d=2$ of the Fourier extension
conjecture for the circle was proved over half a century ago by C. Fefferman
\cite{Fef}, see also A. Zygmund \cite{Zyg} and L. Carleson and P. Sj\"{o}lin
\cite{CaSj}. Just recently, OpenAI has claimed to prove the diagonal
inequality $p=q$ in the case of $d=3$ dimensions with $\mathsf{L}%
\exists\forall\mathsf{N}$ certification \cite{Ope}.

While the conjecture remains open in all higher dimensions $d>3$, with
diagonal bounds inching ever closer to the critical index $\frac{2d}{d-1}$,
the \emph{probabilistic} weakening of the conjecture with smooth Alpert
wavelets has been proved in all dimensions for the diagonal case
$p=q>\frac{2d}{d-1}$ in \cite{Saw7}. Here the frame of smooth Alpert
projections $\left\{  \bigtriangleup_{I;\kappa}^{\eta}\right\}  _{I\in
\mathcal{G}}$ associated with a dyadic grid $\mathcal{G}$ has been constructed
in \cite{Saw7} with $\bigtriangleup_{I;\kappa}^{\eta}f=\left\langle
S^{-1}f,h_{I;\kappa}\right\rangle h_{I;\kappa}^{\eta}$, where $S$ is a bounded
invertible map on $L^{p}$,\ and the full probabilistic Fourier extension
conjecture is the inequality%
\begin{equation}
\mathbb{E}_{2^{\mathcal{G}}}^{\mu}\left\Vert \mathcal{E}\mathbf{1}_{U}%
\sum_{I\in\mathcal{G}}\pm\bigtriangleup_{I;\kappa}^{\eta}f\right\Vert
_{L^{q}\left(  \mathbb{R}^{d}\right)  }\leq C\left\Vert f\right\Vert
_{L^{p}\left(  B\left(  0,\frac{1}{2}\right)  \right)  },\label{prob ext}%
\end{equation}
for $p$ and $q$ satisfying (\ref{Knapp conditions}), and where $\sigma_{d-1} $
is a smooth compactly supported measure on the sphere $\mathbb{S}^{d-1}$ or
the paraboloid $\mathbb{P}^{d-1}$, $\widehat{\mu}\equiv\int_{\mathbb{R}^{d}%
}e^{-ix\cdot\xi}d\mu\left(  x\right)  $ denotes the Fourier transform of a
measure $\mu$, and where the expectation $\mathbb{E}_{2^{\mathcal{G}}}^{\mu}$
is taken over all choices of $\pm$ for each $I\in\mathcal{G}$. Finally,
$\mathcal{E}_{S}$ is the Fourier extension operator
\[
\mathcal{E}_{S}f\left(  \xi\right)  =\int_{\mathbb{R}^{d-1}}e^{-2\pi i\xi
\cdot\Phi_{S}\left(  x\right)  }f\left(  x\right)  dx,
\]
where $\Phi_{S}\left(  x\right)  =\left(  x,\phi_{S}\left(  x\right)  \right)
$ and $\operatorname*{Graph}\phi_{S}\subset S$.

The theorem in \cite{Saw7} proves (\ref{prob ext}) in the case of the sphere,
but the proof there carries over easily to a smooth compact piece of the
paraboloid. We will assume $S$ is such a piece of the paraboloid in this
paper. Using Khintchine's inequality, the probabilistic inequality
(\ref{prob ext}) can be equivalently reformulated as a Fourier square function
inequality,%
\begin{equation}
\left\Vert \left(  \sum_{I\in\mathcal{G}}\left\vert \mathcal{E}\mathbf{1}%
_{U}\bigtriangleup_{I;\kappa}^{\eta}f\right\vert ^{2}\right)  ^{\frac{1}{2}%
}\right\Vert _{L^{q}\left(  \mathbb{R}^{d}\right)  }\leq C\left\Vert
f\right\Vert _{L^{p}\left(  B\left(  0,\frac{1}{2}\right)  \right)
}.\label{square function}%
\end{equation}
It was pointed out in \cite{Saw7} that using the square function inequality
(\ref{square function}), the probabilistic analogue (\ref{prob ext}) fails for
the same pairs $\left(  p,q\right)  $ that (\ref{extension}) is currently
known to fail for - namely those not satisfying (\ref{Knapp conditions}). The
Knapp arc
\[
A_{\operatorname*{Knapp}}\equiv\left\{  \left(  p,q\right)  :\frac{1}{p}%
+\frac{d+1}{d-1}\frac{1}{q}=1\text{ and }2\leq p<\frac{2d}{d-1}\right\}
\]
represents the only set of pairs $\left(  p,q\right)  $ for which it is
\emph{not} known if (\ref{prob ext}) holds or not. Note also that the Knapp
arc can be described as the set of points $\left(  p,q\right)  $ such that
$\left(  \frac{1}{p},\frac{1}{q}\right)  $ lies on the line segment
$S_{\operatorname*{Knapp}}$ that is the subset of the line $x+\frac{d+1}%
{d-1}y=1$ lying strictly between the points corresponding to $p=2$ and
$p=\frac{2d}{d-1}$. The probabilistic Fourier extension theorem will be used
to obtain probabilistic regularity results for the global Schr\"{o}dinger equation.

\section{The global initial value problem for the Schr\"{o}dinger equation}

Denote by $\mathcal{S}\left(  \mathbb{R}^{d}\right)  $ the space of rapidly
decreasing functions on $\mathbb{R}^{d}$, and let $\mathcal{S}\left(
\mathbb{R}^{d}\right)  ^{\ast}$ be the space of tempered distributions.
Consider the global initial value problem for the Schr\"{o}dinger equation,%
\begin{equation}
\left\{
\begin{array}
[c]{cccc}%
i\frac{\partial u}{\partial t}\left(  t,x\right)  -\bigtriangleup_{x}u\left(
t,x\right)  & = & 0 & \left(  t,x\right)  \in\mathbb{R}\times\mathbb{R}%
^{d-1}\\
u\left(  0,x\right)  & = & f\left(  x\right)  & x\in\mathbb{R}^{d-1}%
\end{array}
\right.  ,\label{IVP Schrodinger}%
\end{equation}
where $f\in\mathcal{S}\left(  \mathbb{R}^{d-1}\right)  ^{\ast}$ is a tempered
distribution on $\mathbb{R}^{d-1}$. We will now show the probabilistic Fourier
extension theorem can be used to prove a probabilistic regularity theorem for
(\ref{IVP Schrodinger}).

But first, for the sake of completeness, we include the proof of the following
well known connection between the Schrodinger equation and Fourier extensions.
Let $\Phi\left(  \tau,\xi\right)  \equiv\left(  2\pi\left\vert \xi\right\vert
^{2},\xi\right)  $ be the standard parameterization of $\mathbb{P}^{d-1}$, and
let $\mathcal{F}_{d}$ denote the Fourier transform in Euclidean space
$\mathbb{R}^{d}$.

\begin{theorem}
Let $f\in\mathcal{S}\left(  \mathbb{R}^{d-1}\right)  ^{\ast}$ be a tempered
distribution in $\mathbb{R}^{d-1}$. Then $u_{f}\equiv\mathcal{F}_{d}^{-1}%
\Phi_{\ast}\mathcal{F}_{d-1}f$ is a tempered distribution and is a solution to
the Schr\"{o}dinger initial value problem (\ref{IVP Schrodinger}).
\end{theorem}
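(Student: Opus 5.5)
The plan is to interpret every object in the statement distributionally, verify temperedness by duality, and then read off both the equation and the initial condition on the Fourier side. Here $\Phi_{\ast}$ denotes push-forward: for a tempered distribution $T$ on $\mathbb{R}^{d-1}$ (we will take $T=g\equiv\mathcal{F}_{d-1}f$), set
\[
\left\langle \Phi_{\ast}T,\psi\right\rangle \equiv\left\langle T,\psi\circ\Phi\right\rangle ,\ \ \ \ \ \psi\in\mathcal{S}\left(  \mathbb{R}^{d}\right)  ,\ \ \Phi\left(\xi\right)=\left(2\pi\left\vert \xi\right\vert ^{2},\xi\right).
\]

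\textbf{Step 1: $\Phi_{\ast}$ maps $\mathcal{S}\left(\mathbb{R}^{d-1}\right)^{\ast}$ into $\mathcal{S}\left(\mathbb{R}^{d}\right)^{\ast}$.} The first point is that $\psi\mapsto\psi\circ\Phi$ is a continuous linear map $\mathcal{S}\left(\mathbb{R}^{d}\right)\rightarrow\mathcal{S}\left(\mathbb{R}^{d-1}\right)$.
\begin{itemize}
\item By the chain rule, $\partial^{\alpha}\left(\psi\circ\Phi\right)$ is a finite sum of terms of the form (polynomial in $\xi$) $\times\left(\partial^{\beta}\psi\right)\circ\Phi$, with $\left\vert \beta\right\vert \leq\left\vert \alpha\right\vert $.
\item Since $\left\vert \Phi\left(\xi\right)\right\vert \geq\left\vert \xi\right\vert $, each seminorm $\sup\left\vert \xi^{\gamma}\partial^{\alpha}\left(\psi\circ\Phi\right)\right\vert $ is bounded by finitely many Schwartz seminorms of $\psi$.
\end{itemize}
Hence $\Phi_{\ast}g$ is tempered whenever $g$ is. The Fourier transform is an isomorphism of tempered distributions, so $u_{f}=\mathcal{F}_{d}^{-1}\Phi_{\ast}\mathcal{F}_{d-1}f$ is tempered.

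\textbf{Step 2: the equation.} With the convention $\widehat{h}\left(\tau,\xi\right)=\int e^{-2\pi i\left(t\tau+x\cdot\xi\right)}h$, the operator $i\partial_{t}-\bigtriangleup_{x}$ becomes multiplication by the polynomial
\[
m\left(\tau,\xi\right)=-2\pi\tau+4\pi^{2}\left\vert \xi\right\vert ^{2}=2\pi\left(2\pi\left\vert \xi\right\vert ^{2}-\tau\right).
\]
Thus $\mathcal{F}_{d}\left[\left(i\partial_{t}-\bigtriangleup_{x}\right)u_{f}\right]=m\,\Phi_{\ast}g$. For any test function $\psi$ we have $\left\langle m\Phi_{\ast}g,\psi\right\rangle =\left\langle g,\left(m\psi\right)\circ\Phi\right\rangle =0$, because $m\circ\Phi\equiv0$. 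Injectivity of $\mathcal{F}_{d}$ then gives $\left(i\partial_{t}-\bigtriangleup_{x}\right)u_{f}=0$ in $\mathcal{S}\left(\mathbb{R}^{d}\right)^{\ast}$. If the paper's normalization of $\Phi$ or of the Fourier transform differs, this only changes constants, and the sign in $\Phi$ is exactly what makes $m\circ\Phi$ vanish.

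\textbf{Step 3: the initial condition (main obstacle).} This is the step requiring care, since restricting a general tempered distribution to $\left\{t=0\right\}$ is not defined. The remedy is to show that $u_{f}$ is in fact a continuous function of $t$ with values in $\mathcal{S}\left(\mathbb{R}^{d-1}\right)^{\ast}$. Formally,
\[
u_{f}\left(t,x\right)=\int e^{2\pi i\left(2\pi t\left\vert \xi\right\vert ^{2}+x\cdot\xi\right)}g\left(\xi\right)d\xi ,
\]
which suggests defining $u\left(t\right)\equiv\mathcal{F}_{d-1}^{-1}\left(e^{4\pi^{2}it\left\vert \xi\right\vert ^{2}}g\right)$ for each $t$. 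The argument then runs as follows.
\begin{itemize}
\item The multiplier $e^{4\pi^{2}it\left\vert \xi\right\vert ^{2}}$ is smooth with polynomially bounded derivatives, locally uniformly in $t$. So $u\left(t\right)\in\mathcal{S}\left(\mathbb{R}^{d-1}\right)^{\ast}$, and $t\mapsto u\left(t\right)$ is weak-$\ast$ continuous, indeed smooth.
\item To identify this family with $u_{f}$, test against tensor products $\varphi\left(t\right)\chi\left(x\right)$. Fubini's theorem for the pairing of $g$ with the Schwartz function $\xi\mapsto\int\varphi\left(t\right)e^{4\pi^{2}it\left\vert \xi\right\vert ^{2}}\mathcal{F}_{d-1}^{-1}\chi\left(\xi\right)dt$ gives $\left\langle u_{f},\varphi\otimes\chi\right\rangle =\int\varphi\left(t\right)\left\langle u\left(t\right),\chi\right\rangle dt$.
\item Tensor products are dense in $\mathcal{S}\left(\mathbb{R}^{d}\right)$, so this identity determines $u_{f}$ as the family $\left\{u\left(t\right)\right\}_{t}$.
\end{itemize}
Finally, at $t=0$ the multiplier is $1$, so $u\left(0\right)=\mathcal{F}_{d-1}^{-1}\mathcal{F}_{d-1}f=f$, which is the initial condition in the sense of continuous dependence on $t$.
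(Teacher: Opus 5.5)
Your proposal is correct and follows the paper's route. The symbol $-2\pi\tau+4\pi^{2}\left\vert \xi\right\vert ^{2}$ of $i\partial_{t}-\bigtriangleup_{x}$ vanishes on the image of $\Phi$, so it annihilates $\mathcal{F}_{d}u_{f}=\Phi_{\ast}\mathcal{F}_{d-1}f$, and the initial condition comes from setting $t=0$ in $u\left(t,x\right)=\int e^{2\pi i\left(2\pi\left\vert \xi\right\vert ^{2}t+\xi\cdot x\right)}\mathcal{F}_{d-1}f\left(\xi\right)d\xi$ and using Fourier inversion.

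Beyond that, you supply rigor the paper omits:
\begin{enumerate}
\item a proof that $\Phi_{\ast}$ preserves temperedness, which the paper asserts but does not check;
\item the pairing identity $\left\langle m\Phi_{\ast}g,\psi\right\rangle =\left\langle g,\left(m\circ\Phi\right)\left(\psi\circ\Phi\right)\right\rangle =0$ with $g=\mathcal{F}_{d-1}f$, in place of the paper's support argument, which is incomplete as stated because support in $\mathcal{Z}\left(\sigma\right)$ does not by itself force $\sigma T=0$ for a general distribution $T$;
\item an actual meaning for the trace at $t=0$, via the continuous curve $t\mapsto u\left(t\right)\in\mathcal{S}\left(\mathbb{R}^{d-1}\right)^{\ast}$, where the paper's integral is only formal for general tempered $f$.
\end{enumerate}
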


\begin{proof}
Denote by $\sigma\left(  \tau,\xi\right)  \equiv\left(  -2\pi\tau+4\pi
^{2}\left\vert \xi\right\vert ^{2}\right)  $ the symbol of the differential
operator $\mathcal{L}\equiv i\frac{\partial u}{\partial t}-\bigtriangleup_{x}$
on $\mathbb{R\times R}^{d-1}$. Then the zero set $\mathcal{Z}\left(
\sigma\right)  $ of $\sigma$ is the paraboloid%
\[
\mathbb{P}^{d-1}\equiv\left\{  \left(  \tau,\xi\right)  \in\mathbb{R}%
\times\mathbb{R}^{d-1}:\tau=2\pi\left\vert \xi\right\vert ^{2}\right\}  .
\]

In particular we see that%
\[
\mathcal{F}_{d}u_{f}=\Phi_{\ast}\mathcal{F}_{d-1}f
\]
is supported in the paraboloid $\mathbb{P}^{d-1}$, and it follows that
\[
\sigma\left(  \tau,\xi\right)  \mathcal{F}_{d}u_{f}\left(  \tau,\xi\right)
=\sigma\left(  \tau,\xi\right)  \Phi_{\ast}\mathcal{F}_{d-1}f\left(
\xi\right)  =0,
\]
since $\operatorname*{Supp}\Phi_{\ast}\mathcal{F}_{d-1}f\subset\mathbb{P}%
^{d-1}=\mathcal{Z}\left(  \sigma\right)  $. Thus
\[
0=\mathcal{F}_{d}^{-1}\left[  \sigma\left(  \tau,\xi\right)  \mathcal{F}%
_{d}u_{f}\right]  =\mathcal{LF}_{d}^{-1}\mathcal{F}_{d}u_{f}=\mathcal{L}u_{f},
\]
shows that the tempered distribution $u_{f}$ satisfies the Schr\"{o}dinger
equation in the first line of (\ref{IVP Schrodinger}).

Moreover,
\begin{align*}
u\left(  t,x\right)   & =\mathcal{F}_{d}^{-1}\Phi_{\ast}\mathcal{F}%
_{d-1}f\left(  t,x\right)  =\int_{\mathbb{R}^{d-1}}e^{2\pi i\left(
2\pi\left\vert \xi\right\vert ^{2}t+\xi\cdot x\right)  }\mathcal{F}%
_{d-1}f\left(  \xi\right)  d\xi\\
& \Longrightarrow u\left(  0,x\right)  =\int_{\mathbb{R}^{d-1}}e^{2\pi
i\xi\cdot x}\mathcal{F}_{d-1}f\left(  \xi\right)  d\xi=f\left(  x\right)  ,
\end{align*}
by the Fourier inversion formula, and so we see that $u_{f}$ satisfies the
initial condition in the second line of (\ref{IVP Schrodinger}), and this
completes the proof in part (1) that $u_{f}$ solves (\ref{IVP Schrodinger}).
\end{proof}

It is well known, see e.g. \cite{Tao}, that since the Fourier extension
conjecture (\ref{extension}) is known to be true when $p=2$ in all dimensions
by the Stein-Tomas theorem \cite{Tom}, we have the following $L^{2}$
regularity result for the initial value problem (\ref{IVP Schrodinger}),%
\begin{align*}
& \left\Vert u_{f}\right\Vert _{L^{q}\left(  \mathbb{R}^{d}\right)
}=\left\Vert \mathcal{F}_{d}^{-1}\Phi_{\ast}\mathcal{F}_{d-1}f\right\Vert
_{L^{q}\left(  \mathbb{R}^{d}\right)  }=\left\Vert \mathcal{EF}_{d-1}%
f\right\Vert _{L^{q}\left(  \mathbb{R}^{d}\right)  }\leq C_{d,\sigma
,q}\left\Vert \mathcal{F}_{d-1}f\right\Vert _{L^{2}\left(  \mathbb{R}%
^{d}\right)  }=C_{d,\sigma,q}\left\Vert f\right\Vert _{L^{2}\left(
\mathbb{R}^{d}\right)  },\\
& \ \ \ \ \ \ \ \ \ \ \ \ \ \ \ \ \ \ \ \ \text{for all }f\in L^{2}\left(
\mathbb{R}^{d}\right)  \text{ provided (\ref{Knapp conditions}) holds},
\end{align*}
upon appealing to Plancherel's theorem in the final equality. To obtain an
analogous $L^{p}$ bound for $1<p\leq2$, we can of course use Hausdorff-Young
in place of Plancherel, but if the extension inequality is only known on the
diagonal $p=q$ for compactly supported $f$, then a stronger Sobolev norm will
be needed on $f$ in order to sum up over positive scales. This is in fact the
case for the probabilistic inequality (\ref{prob ext}), and dictates the form
of the regularity inequality we obtain in Theorem \ref{Thm reg} below. This
strategy was also used to obtain the deterministic analogue in $d=3$
dimensions in \cite[see the proof of Corollary 1.2 in Section 7]{Ope}.

We momentarily consider the question of finding Banach spaces $B$, that are
large enough to contain the Schwarz space $\mathcal{S}\left(  \mathbb{R}%
^{d-1}\right)  $ of rapidly decreasing functions, such that the solution
$u_{f}$ has near maximal decay at $\infty$ for $f\in B$, i.e. lies in
$L^{q}\left(  \mathbb{R}^{d}\right)  $ for as small an index $q$ as possible.
If $f$ is the Fourier transform of a smooth cutoff function, then $\left\Vert
u_{f}\right\Vert _{L^{\frac{2d}{d-1}}\left(  \mathbb{R}^{d}\right)  }=\infty$,
which shows that we can only obtain $u_{f}\in L^{q}\left(  \mathbb{R}%
^{d}\right)  $ for $q>\frac{2d}{d-1}$ when $B$ contains $\mathcal{S}\left(
\mathbb{R}^{d-1}\right)  $. The following \emph{probabilistic} regularity
theorem shows that one can obtain such a bound with $q$ arbitrarily close to
$\frac{2d}{d-1}$, by taking $B$ to be an $L^{p}$-Sobolev space with
arbitrarily little smoothness for appropriate $p$, and by taking averages over
all `martingale transforms' of the data $f$ in (\ref{IVP Schrodinger}).

\begin{definition}
Let $1<p\leq2$. For $f\in L^{p}\left(  \mathbb{R}^{d-1}\right)  $, the
Hausdorff-Young inequality implies $\widehat{f}\in L^{p^{\prime}}\left(
\mathbb{R}^{d-1}\right)  $, and so $\widehat{f}$\ has an Alpert decomposition
\[
\widehat{f}=\sum_{I\in\mathcal{G}}\left\langle S^{-1}\widehat{f},h_{I;\kappa
}\right\rangle h_{I;\kappa}^{\eta}%
\]
both pointwise a.e. and in $L^{p^{\prime}}$ norm. We define the \emph{Fourier}
`martingale transform' $f_{\alpha}$ of $f$ by
\[
f_{\alpha}\equiv\sum_{I\in\mathcal{G}}\alpha_{I}\left\langle S^{-1}\widehat
{f},h_{I;\kappa}\right\rangle h_{I;\kappa}^{\eta}\ ,\ \ \ \ \ \alpha=\left\{
\alpha_{I}\right\}  _{I\in\mathcal{G}}\in2^{\mathcal{G}}\equiv\left\{
-1,1\right\}  ^{\mathcal{G}},
\]
where $\alpha$ denotes a random choice of sign $\pm$.
\end{definition}

Note that if $\alpha_{0}$ is the sequence of all plus signs, then
$f_{\alpha_{0}}=\widehat{f}$, which explains the terminology Fourier
`martingale transform'. Note also that the solution $u_{f_{\alpha}}$ to
(\ref{IVP Schrodinger}) with initial data $f_{\alpha}$, satisfies
\begin{align*}
& u_{f_{\alpha}}=\mathcal{F}_{d}^{-1}\Phi_{\ast}\mathcal{F}_{d-1}f_{\alpha
}=\mathcal{F}_{d}^{-1}\Phi_{\ast}\mathcal{F}_{d-1}\sum_{I\in\mathcal{G}}%
\alpha_{I}\left\langle S^{-1}\widehat{f},h_{I;\kappa}\right\rangle
h_{I;\kappa}^{\eta}\\
& =\sum_{I\in\mathcal{G}}\alpha_{I}\left\langle S^{-1}\widehat{f},h_{I;\kappa
}\right\rangle \mathcal{F}_{d}^{-1}\Phi_{\ast}\mathcal{F}_{d-1}h_{I;\kappa
}^{\eta}=\sum_{I\in\mathcal{G}}\alpha_{I}\left\langle S^{-1}\widehat
{f},h_{I;\kappa}\right\rangle u_{h_{I;\kappa}^{\eta}},\\
& \ \ \ \ \ \ \ \ \ \ \ \ \ \ \ \text{for }\alpha\in2^{\mathcal{G}}%
\equiv\left\{  -1,1\right\}  ^{\mathcal{G}}.
\end{align*}
Finally, we denote by $W^{\sigma,p}\left(  \mathbb{R}^{d-1}\right)  $ the
(inhomogeneous) $L^{p}$ Sobolev space of order $\sigma>0$ on $\mathbb{R}%
^{d-1}$ normed by%
\[
\left\Vert f\right\Vert _{W^{\sigma,p}\left(  \mathbb{R}^{d-1}\right)
}=\left\Vert \left(  I-\bigtriangleup\right)  ^{\frac{\sigma}{2}}f\right\Vert
_{L^{p}\left(  \mathbb{R}^{d-1}\right)  }.
\]

\begin{theorem}
\label{Thm reg}Let $1<p\leq2$ and $\sigma>0$. Suppose $f\in W^{\sigma
,p}\left(  \mathbb{R}^{d}\right)  $ and $f_{\alpha}=f_{\pm}$ denotes the
`martingale transform' introduced above. Then the family of solutions
$\left\{  u_{f_{\alpha}}\right\}  $ satisfies the average inequality,%
\begin{equation}
\mathbb{E}_{2^{\mathcal{G}}}\left\Vert u_{f_{\alpha}}\right\Vert
_{L^{q}\left(  \mathbb{R}^{d}\right)  }\leq C_{d,\sigma,p,q}\left\Vert
f\right\Vert _{W^{\sigma,p}\left(  \mathbb{R}^{d}\right)  }%
,\ \ \ \ \ \text{for all }f\in W^{\sigma,p}\left(  \mathbb{R}^{d}\right)
,\label{reg}%
\end{equation}
provided $\sigma>0$ and%
\begin{align}
\frac{2d}{d+1}  & <p\leq2,\label{index conditions}\\
\frac{d+1}{d-1}p  & <q<\frac{d+1}{d-1-p\sigma}p.\nonumber
\end{align}
In particular, for every $q>\frac{2d}{d-1}$, there exists $p$ and $\sigma$
such that $\frac{2d}{d+1}<p\leq2$ and $\sigma>0$ and (\ref{reg}) holds.
Moreover, as $q\searrow\frac{2d}{d-1}$, we must have $p\searrow\frac{2d}{d+1}$
and $\sigma\searrow0$.
\end{theorem}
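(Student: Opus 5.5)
The plan is to reduce (\ref{reg}) to the probabilistic extension theorem (\ref{prob ext}) for a compact piece of the paraboloid, one dyadic frequency shell at a time. We then sum over shells using parabolic rescaling, paying for each shell with the Sobolev weight $2^{k\sigma}$. First fix a smooth Littlewood--Paley partition of unity $1=\sum_{k\geq0}\psi_k(\xi)$ on $\mathbb{R}^{d-1}$, with $\psi_0$ supported in $B(0,1)$ and $\psi_k$ supported in the annulus $|\xi|\sim2^k$. This splits $\widehat f=\sum_k\psi_k\widehat f$. Since the Alpert projections are localized, the martingale transform essentially commutes with this splitting, up to boundary cubes that straddle shells. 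I would handle that mismatch by choosing the grid $\mathcal{G}$ and the cutoffs compatibly, or by absorbing it into a bounded overlap of neighbouring shells. Thus
\[
u_{f_\alpha}=\sum_{k\geq0}u_{(f_k)_\alpha},
\]
with $\widehat{f_k}=\psi_k\widehat f$, and Minkowski's inequality in $L^q$ and in $\mathbb{E}$ reduces the problem to bounding $\mathbb{E}\|u_{(f_k)_\alpha}\|_{L^q}$ for each $k$.

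For $k=0$, the data $\widehat{f_0}$ lives on a compact piece of the paraboloid, so (\ref{prob ext}) applies with exponent pair $(p',q)$. We need $\frac1{p'}+\frac{d+1}{d-1}\frac1q<1$ and $q>\frac{2d}{d-1}$; the condition $q>\frac{d+1}{d-1}p$ is exactly $\frac{d+1}{d-1}\frac1q<\frac1p=1-\frac1{p'}$, i.e.\ strictly below the Knapp line. Hausdorff--Young then gives $\|\widehat{f_0}\|_{L^{p'}}\lesssim\|f_0\|_{L^p}\lesssim\|f\|_{W^{\sigma,p}}$.

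For $k\geq1$, rescale $\xi=2^k\eta$, under which $(t,x)\mapsto(2^{-2k}t,2^{-k}x)$. A change of variables gives
\[
\|u_{g}\|_{L^q(\mathbb{R}^d)}=2^{k(d-1)}2^{-k(d+1)/q}\|u_{\tilde g}\|_{L^q},
\]
where $\widehat{\tilde g}(\eta)=\widehat g(2^k\eta)$ is supported in the unit annulus. Applying the $k=0$ bound, $\|\widehat{\tilde g}\|_{L^{p'}}=2^{-k(d-1)/p'}\|\widehat g\|_{L^{p'}}$, and Hausdorff--Young $\|\widehat{f_k}\|_{L^{p'}}\lesssim\|f_k\|_{L^p}\lesssim2^{-k\sigma}\|f\|_{W^{\sigma,p}}$. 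Collecting exponents, the $k$-th term is bounded by
\[
2^{k\left[(d-1)/p-(d+1)/q-\sigma\right]}\|f\|_{W^{\sigma,p}}.
\]
This is summable iff $q<\frac{(d+1)p}{d-1-p\sigma}$, which is precisely the right-hand condition in (\ref{index conditions}). The requirement $p>\frac{2d}{d+1}$ (equivalently $p'<\frac{2d}{d-1}$) together with the lower bound on $q$ ensures the admissible $q$-interval is nonempty and lies in the region where (\ref{prob ext}) is available. The final assertions then follow by elementary algebra: given $q>\frac{2d}{d-1}$, pick $p$ slightly above $\frac{2d}{d+1}$ with $\frac{d+1}{d-1}p<q$, then $\sigma$ large enough for the upper bound. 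Conversely, $q\searrow\frac{2d}{d-1}$ forces $p\searrow\frac{2d}{d+1}$ via $q>\frac{d+1}{d-1}p$.

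The main obstacle is the rescaling step. The Alpert frame on $\widehat{\tilde g}$ after dilation is the frame for a dilated grid, and the martingale transform must be compatible: signs on cubes of $\mathcal{G}$ at scale $2^k\ell$ correspond to signs on the rescaled grid. I would check that (\ref{prob ext}) holds uniformly over dyadic dilates of $\mathcal{G}$, which is plausible since dyadic grids are dilation invariant by powers of $2$. I would also check that the operator $S$ and the cutoffs commute with dilation up to bounded errors, and that the constant in (\ref{prob ext}) is uniform over the unit annulus pieces.
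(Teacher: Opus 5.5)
Your route is the paper's: dyadic frequency shells, parabolic rescaling of each shell to unit scale, the averaged inequality (\ref{prob ext}) plus Hausdorff--Young giving $2^{k(\frac{d-1}{p}-\frac{d+1}{q})}$ per shell, and summation against the Sobolev gain $2^{-k\sigma}$. However, the point you treat as a side issue is a genuine gap. The paper's proof passes over it as well: it applies the averaged inequality to the cut-offs $\chi_{2^{n}Q\setminus2^{n-1}Q}\mathcal{F}_{d-1}f_{\alpha}$ as though they were martingale transforms of shell-supported data. Write $T_{\alpha}G=\sum_{I\in\mathcal{G}}\alpha_{I}\langle S^{-1}G,h_{I;\kappa}\rangle h_{I;\kappa}^{\eta}$. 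Your identity $T_{\alpha}\widehat{f}=\sum_{k}T_{\alpha}(\psi_{k}\widehat{f})$ is exact by linearity, but $T_{\alpha}(\psi_{k}\widehat{f})$ is not supported near $\{|\xi|\sim2^{k}\}$. The defect is also not a bounded overlap of neighbouring shells. For every $j>k+1$, the $2^{d-1}$ dyadic cubes of side $2^{j}$ with a vertex at the origin contain part of the shell, and $h_{I;\kappa}$ is a single polynomial of degree $<\kappa$ there. So $\langle\psi_{k}\widehat{f},h_{I;\kappa}\rangle$ is a combination of low moments of $\psi_{k}\widehat{f}$, which need not vanish. No choice of cut-off functions compatible with $\mathcal{G}$ removes these ancestors. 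After rescaling, the data live on cubes of every size $2^{j-k}$, so (\ref{prob ext}), which concerns data in a fixed compact set (note the cut-off $\mathbf{1}_{U}$), does not apply to $T_{\alpha}(\psi_{k}\widehat{f})$.

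The missing idea is to split the frame sum rather than the function, and to pay for the ancestors with their geometric decay. For the plain Alpert frame ($S$ the identity, $\eta=0$) this works. Let $\Pi_{k+1}$ be the projection onto functions that are polynomials of degree $<\kappa$ on each dyadic cube of side $2^{k+1}$. The terms with $\ell(I)\leq2^{k+1}$ form exactly the martingale transform of $\psi_{k}\widehat{f}-\Pi_{k+1}(\psi_{k}\widehat{f})$. This function and all its martingale transforms are supported in $[-2^{k+1},2^{k+1}]^{d-1}$, with $L^{p'}$ norm $\lesssim\Vert\psi_{k}\widehat{f}\Vert_{L^{p'}}$, so your rescaling argument applies to it verbatim. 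Each ancestor term on a cube $I$ of side $2^{j}$ is a single frame element, hence its own martingale transform up to sign. By H\"older on the shell, $\Vert\langle\psi_{k}\widehat{f},h_{I;\kappa}\rangle h_{I;\kappa}\Vert_{L^{p'}}\leq\Vert\psi_{k}\widehat{f}\Vert_{L^{1}}\Vert h_{I;\kappa}\Vert_{L^{\infty}}\Vert h_{I;\kappa}\Vert_{L^{p'}}\lesssim2^{(k-j)\frac{d-1}{p}}\Vert\psi_{k}\widehat{f}\Vert_{L^{p'}}$, while rescaling from scale $2^{j}$ costs $2^{j(\frac{d-1}{p}-\frac{d+1}{q})}$. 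Summing over $j>k+1$ gives $\lesssim2^{k(\frac{d-1}{p}-\frac{d+1}{q})}\Vert\psi_{k}\widehat{f}\Vert_{L^{p'}}$, the same bound as the main term. For the smooth frame of the statement you must in addition control the non-locality of $S^{-1}$ and the enlarged supports of the $h_{I;\kappa}^{\eta}$; neither your sketch nor the paper does this.

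The rest of your argument is sound, and your summation is cleaner than the paper's. The bound $\Vert\psi_{k}\widehat{f}\Vert_{L^{p'}}\lesssim2^{-k\sigma}\Vert(1+|\xi|^{2})^{\sigma/2}\widehat{f}\Vert_{L^{p'}}\lesssim2^{-k\sigma}\Vert f\Vert_{W^{\sigma,p}}$ holds pointwise in $\xi$ and then by Hausdorff--Young for $(I-\bigtriangleup)^{\sigma/2}f$, with no multiplier theorem needed. It turns the sum into a geometric series and yields the full open range (\ref{index conditions}). The paper instead passes through a H\"older step with an auxiliary $r>1$ and a Littlewood--Paley square function. Your worry about dilating the grid is harmless, since the standard dyadic grid is invariant under dilation by $2^{k}$ and the expectation over independent signs is invariant under reindexing. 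Finally, you do not address the clause $\sigma\searrow0$, but it cannot hold as a necessity anyway, since (\ref{reg}) for one $\sigma$ implies it for every larger $\sigma$.
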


Lemma \ref{necc} below shows that the exponent range (\ref{index conditions})
in the theorem is sharp up to endpoints. The case $d=3$ of the theorem holds
in the stronger deterministic sense by the recent result of OpenAI
\cite[Corollary 1.2]{Ope}.

\subsection{Necessity by parabolic scaling of the square function
reformulation}

Now assume we have the square function inequality,%
\begin{equation}
\left\Vert \left(  \sum_{I\in\mathcal{G}}\left\vert \mathcal{E}\mathbf{1}%
_{U}\bigtriangleup_{I;\kappa}^{\eta}f\right\vert ^{2}\right)  ^{\frac{1}{2}%
}\right\Vert _{L^{q}\left(  \mathbb{R}^{d}\right)  }\leq C\left\Vert
f\right\Vert _{W^{\sigma,p}(\mathbb{R}^{d-1})}=C\left\Vert (I-\Delta
)^{\frac{\sigma}{2}}f\right\Vert _{L^{p}(\mathbb{R}^{d-1})}\label{good-est}%
\end{equation}
and for convenience in notation we set $u=\operatorname*{S}^{\mathcal{F}%
}f\equiv\left(  \sum_{I\in\mathcal{G}}\left\vert \mathcal{E}\mathbf{1}%
_{U}\bigtriangleup_{I;\kappa}^{\eta}f\right\vert ^{2}\right)  ^{\frac{1}{2}}$.
Then consider the parabolic dilation $u_{\lambda}(t,x)=u(\lambda^{2}t,\lambda
x)$ and the standard dilation $f_{\lambda}(x)=f(\lambda x)$. Substituting
$u_{\lambda}$ and $f_{\lambda}$ into (\ref{good-est}) and gathering the powers
of $\lambda$ together, we get
\begin{equation}
\Vert u\Vert_{L^{q}(\mathbb{R}^{d})}\leq C\lambda^{\frac{d+1}{q}-\frac{d-1}%
{p}}\Vert(I-\lambda^{2}\Delta)^{\frac{\sigma}{2}}f\Vert_{L^{p}(\mathbb{R}%
^{d-1})}.\label{bd-mixed}%
\end{equation}
As $\lambda\rightarrow0$ we have for the right-hand side
\[
\lambda^{\frac{d+1}{q}-\frac{d-1}{p}}\Vert(I-\lambda^{2}\Delta)^{\frac{\sigma
}{2}}f\Vert_{L^{p}(\mathbb{R}^{d-1})}\approx\lambda^{\frac{d+1}{q}-\frac
{d-1}{p}}\Vert f\Vert_{L^{p}(\mathbb{R}^{d-1})},
\]
so we must have $\frac{d+1}{q}\leq\frac{d-1}{p}$ to satisfy (\ref{bd-mixed}).
If $\lambda\rightarrow\infty$ we have
\[
\lambda^{\frac{d+1}{q}-\frac{d-1}{p}}\Vert(I-\lambda^{2}\Delta)^{\frac{\sigma
}{2}}f\Vert_{L^{p}(\mathbb{R}^{d-1})}\approx\lambda^{\frac{d+1}{q}-\frac
{d-1}{p}}\Vert(\lambda^{2}\Delta)^{\frac{\sigma}{2}}f\Vert_{L^{p}%
(\mathbb{R}^{d-1})}=\lambda^{\frac{d+1}{q}-\frac{d-1}{p}+\frac{\sigma}{p}%
}\Vert\Delta^{\frac{\sigma}{2}}f\Vert_{L^{p}(\mathbb{R}^{d-1})},
\]
so we must require $\frac{d+1}{q}\geq\frac{d-1}{p}-\frac{\sigma}{p}$. All in
all, the scaling gives
\[
\frac{d-1}{p}-\frac{\sigma}{p}\leq\frac{d+1}{q}\leq\frac{d-1}{p},
\]
or in terms of $q$
\[
\frac{d+1}{d-1}p\leq q\leq\frac{d+1}{d-1-p\sigma}.
\]
Thus using the equivalence of (\ref{prob ext}) and (\ref{square function}), we
have obtained the following necessary condition.

\begin{lemma}
\label{necc}Let $1<p\leq2$ and $\sigma\geq0$. If the family of solutions
$\left\{  u_{f_{\alpha}}\right\}  $ satisfies%
\[
\mathbb{E}_{2^{\mathcal{G}}}\left\Vert u_{f_{\alpha}}\right\Vert
_{L^{q}\left(  \mathbb{R}^{d}\right)  }\leq C_{d,\sigma,p,q}\left\Vert
f\right\Vert _{W^{\sigma,p}\left(  \mathbb{R}^{d}\right)  }%
,\ \ \ \ \ \text{for all }f\in W^{\sigma,p}\left(  \mathbb{R}^{d}\right)  ,
\]
then
\begin{equation}
\frac{d+1}{d-1}p\leq q\leq\frac{d+1}{d-1-p\sigma}.\label{necc cond}%
\end{equation}

\end{lemma}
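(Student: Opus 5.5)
The plan is a scaling argument. I first pass from the averaged inequality to a square function inequality, then test that inequality on dilated data and send the dilation parameter to $0$ and to $\infty$.

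\emph{Step 1: square function form.} By Khintchine's inequality, applied pointwise in $(t,x)$ and then with Minkowski/Kahane to move the expectation inside the $L^{q}$ norm, the hypothesis is equivalent, up to constants, to
\[
\Bigl\Vert \Bigl( \sum_{I\in\mathcal{G}} \bigl\vert \langle S^{-1}\widehat{f},h_{I;\kappa}\rangle u_{h_{I;\kappa}^{\eta}} \bigr\vert^{2} \Bigr)^{\frac{1}{2}} \Bigr\Vert_{L^{q}(\mathbb{R}^{d})} \leq C\Vert f\Vert_{W^{\sigma,p}} .
\]
This is (\ref{good-est}), and it is the same equivalence as between (\ref{prob ext}) and (\ref{square function}). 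Call the left side $\Vert Sf\Vert_{L^{q}}$.

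\emph{Step 2: behaviour under dilation.} For $f_{\lambda}(x)=f(\lambda x)$, the free Schr\"{o}dinger solution satisfies $u_{f_{\lambda}}(t,x)=u_{f}(\lambda^{2}t,\lambda x)$. I need the same relation for the square function, namely $\Vert Sf_{\lambda}\Vert_{L^{q}}\approx\Vert (Sf)(\lambda^{2}\cdot,\lambda\cdot)\Vert_{L^{q}}=\lambda^{-\frac{d+1}{q}}\Vert Sf\Vert_{L^{q}}$.

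This is the main obstacle: the grid $\mathcal{G}$ and the Alpert frame are not dilation invariant for arbitrary $\lambda$. I would first restrict to $\lambda=2^{k}$, under which the dyadic grid is preserved. Then $\widehat{f_{\lambda}}=\lambda^{-(d-1)}\widehat{f}(\cdot/\lambda)$, and the frame elements $h_{I;\kappa}^{\eta}$ map to rescaled frame elements $h_{\lambda I;\kappa}^{\eta}$ up to $L^{2}$ normalization. I also need that $S$ commutes with dyadic dilations, which I expect from its construction in \cite{Saw7}; if it does not, I would use its boundedness to compare the two square functions up to constants.

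\emph{Step 3: compare the two sides.} On the right, $\Vert (I-\Delta)^{\frac{\sigma}{2}}f_{\lambda}\Vert_{L^{p}}=\lambda^{-\frac{d-1}{p}}\Vert (I-\lambda^{2}\Delta)^{\frac{\sigma}{2}}f\Vert_{L^{p}}$. Inserting both computations into the inequality of Step 1 gives (\ref{bd-mixed}) for a fixed nonzero Schwartz $f$ with $Sf\neq0$. Now take the limits:
\begin{itemize}
\item As $\lambda\to0$ the multiplier tends to $I$, so the right side behaves like $\lambda^{\frac{d+1}{q}-\frac{d-1}{p}}\Vert f\Vert_{L^{p}}$. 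Boundedness forces $\frac{d+1}{q}\geq\frac{d-1}{p}$.
\item As $\lambda\to\infty$, choose $f$ with $\widehat{f}$ supported away from the origin. Then $(I-\lambda^{2}\Delta)^{\frac{\sigma}{2}}\approx\lambda^{\sigma}(-\Delta)^{\frac{\sigma}{2}}$ by a multiplier argument, so the right side behaves like $\lambda^{\frac{d+1}{q}-\frac{d-1}{p}+\sigma}\Vert(-\Delta)^{\frac{\sigma}{2}}f\Vert_{L^{p}}$. This forces $\frac{d+1}{q}\leq\frac{d-1}{p}-\sigma$ up to the exponent normalization.
\end{itemize}

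\emph{Step 4: check signs and conventions.} The direction of each inequality depends on which side the powers of $\lambda$ sit, so I would recheck the signs carefully against (\ref{necc cond}). Whether the smoothness enters as $\sigma$ or as $\sigma/p$ depends on the paper's normalization, and I would follow the convention that yields $\frac{d+1}{d-1-p\sigma}$. Solving the two exponent inequalities for $q$ then gives (\ref{necc cond}).
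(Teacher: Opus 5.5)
Your route is the same as the paper's: Khintchine/Kahane to reach the square function form (\ref{good-est}), parabolic dilation to get (\ref{bd-mixed}), then $\lambda\to0$ and $\lambda\to\infty$. The genuine gap is in Step 3, where both limiting conclusions go the wrong way. In (\ref{bd-mixed}) the left side $\Vert Sf\Vert_{L^{q}}$ is a fixed positive number. A contradiction can therefore only come from the right side tending to $0$, and a right side that blows up is harmless. You required the right side to stay bounded; what is actually needed is that it not tend to zero.

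\emph{The limit $\lambda\to0$.} The right side is $\approx\lambda^{\frac{d+1}{q}-\frac{d-1}{p}}\Vert f\Vert_{L^{p}}$, which tends to $0$ if the exponent is positive. So the inequality forces $\frac{d+1}{q}\leq\frac{d-1}{p}$, i.e.\ $q\geq\frac{d+1}{d-1}p$. Your $\frac{d+1}{q}\geq\frac{d-1}{p}$ gives the opposite bound $q\leq\frac{d+1}{d-1}p$.

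\emph{The limit $\lambda\to\infty$.} The right side is $\lesssim\lambda^{\frac{d+1}{q}-\frac{d-1}{p}+\sigma}$, so one needs $\frac{d+1}{q}\geq\frac{d-1}{p}-\sigma$, not $\leq$.

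Taken together, your two conditions read $\frac{d-1}{p}\leq\frac{d+1}{q}\leq\frac{d-1}{p}-\sigma$, which is impossible for every $\sigma>0$. As written, your argument would show that no such estimate ever holds, contradicting Theorem \ref{Thm reg}; that should have flagged the error. Postponing the sign check to Step 4 and checking it ``against (\ref{necc cond})'' fits the argument to the answer rather than proving it.

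Step 4 has a related problem. Whether the smoothness enters as $\sigma$ or as $\sigma/p$ is not a matter of normalization. Since $\Vert(-\lambda^{2}\Delta)^{\frac{\sigma}{2}}f\Vert_{L^{p}}=\lambda^{\sigma}\Vert(-\Delta)^{\frac{\sigma}{2}}f\Vert_{L^{p}}$, your $\lambda^{\sigma}$ is correct, and the $\frac{\sigma}{p}$ in the paper's display is a slip. With the signs fixed you get $\frac{d-1}{p}-\sigma\leq\frac{d+1}{q}\leq\frac{d-1}{p}$, i.e.\ $\frac{d+1}{d-1}p\leq q\leq\frac{(d+1)p}{d-1-p\sigma}$, with no upper restriction when $p\sigma\geq d-1$. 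This is the range in Theorem \ref{Thm reg}, and the one for which the remark about the Knapp line at $\sigma=0$ makes sense. The printed upper bound $\frac{d+1}{d-1-p\sigma}$ has lost a factor $p$: read literally at $\sigma=0$ it gives an empty range, since $p>1$. No convention legitimately produces it, so do not tune conventions to reproduce it.

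Finally, restricting to $\lambda=2^{k}$ with a dilation-covariant frame is more careful than the paper, which substitutes the dilates without comment. Your fallback, however, would not work. If $S$ fails to commute with dilations, its boundedness alone gives no relation between the coefficients $\langle S^{-1}\widehat{f_{\lambda}},h_{I;\kappa}\rangle$ and dilates of $\langle S^{-1}\widehat{f},h_{I;\kappa}\rangle$. You genuinely need the covariance.
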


Note that the necessary condition (\ref{necc cond}) with $\sigma=0$ is
precisely the condition that $\left(  p^{\prime},q\right)  $ lies on the Knapp
line. Note also that this is guaranteed by our restriction
\[
\frac{d+1}{d-1}p<q<\frac{d+1}{d-1-p\sigma}.
\]

\subsection{Sufficiency by averaging over martingale transforms}

Now we turn to proving sufficiency.

\begin{proof}
[Proof of Theorem \ref{reg}]For convenience we will suppress the martingale
transform notation $f_{\alpha}$ and simply write $f$ with the understanding
that $f=f_{\alpha}=\sum_{I\in\mathcal{G}}\alpha_{I}\left\langle S^{-1}%
\widehat{f},h_{I;\kappa}\right\rangle h_{I;\kappa}^{\eta}$ and $u_{f}%
=u_{f_{\alpha}}$ throughout the following arguments. Now we write%
\begin{align*}
\mathcal{F}_{d-1}f  & =\mathbf{\chi}_{Q}\mathcal{F}_{d-1}f+\sum_{n=1}^{\infty
}\mathbf{\chi}_{2^{n}Q\setminus2^{n-1}Q}\mathcal{F}_{d-1}f\equiv\sum
_{n=0}^{\infty}\widehat{f_{n}}\ ,\\
u_{f}  & =\mathcal{F}_{d}^{-1}\Phi_{\ast}\mathcal{F}_{d-1}f=\mathcal{F}%
_{d}^{-1}\Phi_{\ast}\sum_{n=0}^{\infty}\widehat{f_{n}}=\sum_{n=0}^{\infty
}\mathcal{F}_{d}^{-1}\Phi_{\ast}\widehat{f_{n}}=\sum_{n=0}^{\infty}%
\mathcal{E}\widehat{f_{n}}\ ,
\end{align*}
where $\mathbf{1}_{\mathbb{R}^{d-1}}=\mathbf{\chi}_{Q}\mathcal{F}_{d-1}%
f+\sum_{n=1}^{\infty}\mathbf{\chi}_{2^{n}Q\setminus2^{n-1}Q}$ is a smoothly
adapted partition of unity. Then $\Phi_{\ast}\widehat{f_{0}}$ is a singular
measure in $\mathbb{R}^{d}$ that is absolutely continuous\ with respect to
that portion of surface measure $\sigma_{\mathbb{P}^{d-1}}$ on the paraboloid
that lies above $Q$, and has density $\widehat{f_{0}}\in L^{p^{\prime}}\left(
\Phi\left(  Q\right)  \right)  $.

The probabilistic Fourier extension theorem implies that $\mathcal{F}_{d}%
^{-1}:L^{p^{\prime}}\left(  \mathbf{1}_{\Phi\left(  Q\right)  }\sigma
_{\mathbb{P}^{d-1}}\right)  \rightarrow L^{q}\left(  \mathbb{R}^{d}\right)  $
\emph{on average} if $\frac{1}{p^{\prime}}+\frac{d+1}{d-1}\frac{1}{q}<1$ and
$1\leq p^{\prime}<\frac{2d}{d-1}$, where $Q$ is the unit cube in
$\mathbb{R}^{d-1}$. Thus we conclude that there is a positive constant
$C_{d,p,q}$ such that $\mathcal{F}_{n}^{-1}\Phi_{\ast}\widehat{f_{0}}$
satisfies%
\[
\mathbb{E}_{2^{\mathcal{G}}}\left\Vert \mathcal{F}_{n}^{-1}\Phi_{\ast}%
\widehat{f_{0}}\right\Vert _{L^{q}\left(  \mathbb{R}^{n}\right)  }\leq
C_{d,r,q}\left\Vert \Phi_{\ast}\widehat{f_{0}}\right\Vert _{L^{p^{\prime}%
}\left(  \Phi\left(  Q\right)  \right)  }\approx C_{d,p,q}\left\Vert
\widehat{f_{0}}\right\Vert _{L^{p^{\prime}}\left(  Q\right)  }\lesssim
C_{d,p,q}\left\Vert f_{0}\right\Vert _{L^{p}\left(  Q\right)  },
\]
provided $\frac{1}{p^{\prime}}+\frac{d+1}{d-1}\frac{1}{q}<1$ and $1\leq
p^{\prime}<\frac{2d}{d-1}$.

In order to bound the averages of norms of the functions $\mathcal{F}_{n}%
^{-1}\Phi_{\ast}\widehat{f_{n}}$ when $n\geq1$, we recall

(\textbf{1}) the parabolic rescaling $\delta_{t}^{\operatorname*{para}}$
defined by
\[
\delta_{t}^{\operatorname*{para}}F\left(  \xi_{n},\xi^{\prime}\right)
=F\left(  t^{2}\xi_{n},t\xi^{\prime}\right)  ,
\]
which preserves functions defined on the paraboloid $\mathbb{P}^{n-1}=\left\{
\left(  \xi_{n},\xi^{\prime}\right)  :\xi_{n}=\left\vert \xi^{\prime
}\right\vert ^{2}\right\}  $, and

(\textbf{2}) the standard rescaling $\delta_{t}$ defined in $\mathbb{R}^{k}$
by $\delta_{t}f\left(  \xi^{\prime}\right)  =f\left(  t\xi^{\prime}\right)  $.

Then we have%
\begin{align*}
&  \mathbb{E}_{2^{\mathcal{G}}}\left\Vert \delta_{t}^{\operatorname*{para}%
}\mathcal{E}f\right\Vert _{L^{q}\left(  \mathbb{R}^{d}\right)  }%
^{q}=\mathbb{E}_{2^{\mathcal{G}}}\int\left\vert \delta_{t}%
^{\operatorname*{para}}\mathcal{E}f\right\vert ^{q}d\xi=\mathbb{E}%
_{2^{\mathcal{G}}}\int_{\mathbb{R}}\int_{\mathbb{R}^{d-1}}\left\vert
\mathcal{E}f\left(  t\xi^{\prime},t^{2}\xi_{d}\right)  \right\vert ^{q}%
d\xi^{\prime}d\xi_{d}\\
&  =\mathbb{E}_{2^{\mathcal{G}}}\int_{\mathbb{R}}\int_{\mathbb{R}^{d-1}%
}\left\vert \int e^{-i\left(  \left\vert x\right\vert ^{2},x\right)
\cdot\left(  t^{2}\xi_{d},t\xi^{\prime}\right)  }f\left(  x\right)
dx\right\vert ^{q}d\xi^{\prime}d\xi_{d}=\mathbb{E}_{2^{\mathcal{G}}}%
\int_{\mathbb{R}}\int_{\mathbb{R}^{d-1}}\left\vert \int e^{-i\left(
\left\vert tx\right\vert ^{2},tx\right)  \cdot\left(  \xi_{d},\xi^{\prime
}\right)  }f\left(  x\right)  dx\right\vert ^{q}d\xi^{\prime}d\xi_{d}\\
&  =\mathbb{E}_{2^{\mathcal{G}}}\int_{\mathbb{R}}\int_{\mathbb{R}^{d-1}%
}\left\vert \int e^{-i\left(  \left\vert u\right\vert ^{2},u\right)
\cdot\left(  \xi_{d},\xi^{\prime}\right)  }f\left(  \frac{u}{t}\right)
\frac{du}{t^{d-1}}\right\vert ^{q}d\xi^{\prime}d\xi_{d}=t^{-\left(
d-1\right)  q}\mathbb{E}_{2^{\mathcal{G}}}\left\Vert \mathcal{E}\delta
_{\frac{1}{t}}f\right\Vert _{L^{q}\left(  \mathbb{R}^{d}\right)  }^{q}.
\end{align*}
We also have%
\begin{align*}
\left\Vert \delta_{t}^{\operatorname*{para}}F\right\Vert _{L^{q}\left(
\mathbb{R}^{d}\right)  }^{q} &  =\int_{\mathbb{R}^{d}}\left\vert \delta
_{t}^{\operatorname*{para}}F\left(  \xi\right)  \right\vert ^{q}d\xi
=\int_{\mathbb{R}}\int_{\mathbb{R}^{d-1}}\left\vert F\left(  t^{2}\xi_{d}%
,t\xi^{\prime}\right)  \right\vert ^{q}d\xi^{\prime}d\xi_{d}\\
&  =\int_{\mathbb{R}}\int_{\mathbb{R}^{d-1}}\left\vert F\left(  \eta_{d}%
,\eta^{\prime}\right)  \right\vert ^{q}\frac{d\eta^{\prime}}{t^{d-1}}%
\frac{d\eta_{d}}{t^{2}}=\frac{1}{t^{d+1}}\left\Vert F\right\Vert
_{L^{q}\left(  \mathbb{R}^{d}\right)  }^{q}%
\end{align*}
and%
\begin{align*}
\left\Vert \delta_{\frac{1}{t}}H\right\Vert _{L^{p}\left(  \mathbb{R}%
^{d-1}\right)  }^{p} &  =\int_{\mathbb{R}^{d-1}}\left\vert \delta_{\frac{1}%
{t}}H\left(  x\right)  \right\vert ^{p}dx=\int_{\mathbb{R}^{d-1}}\left\vert
H\left(  \frac{x}{t}\right)  \right\vert ^{p}dx\\
&  =\int_{\mathbb{R}^{d-1}}\left\vert H\left(  y\right)  \right\vert
^{p}t^{d-1}dy=t^{d-1}\left\Vert H\right\Vert _{L^{p}\left(  \mathbb{R}%
^{d-1}\right)  }^{p}.
\end{align*}

Now we apply $\delta_{2^{n}}$ to $\widehat{f_{n}}=\mathbf{\chi}_{2^{n}%
Q\setminus2^{n-1}Q}\mathcal{F}_{d-1}f\in L^{p^{\prime}}\left(  2^{n}%
Q\setminus2^{n-1}Q\right)  $ and note that%
\[
\left\Vert \widehat{f_{n}}\right\Vert _{L^{p^{\prime}}\left(  2^{n}%
Q\setminus2^{n-1}Q\right)  }^{p^{\prime}}=2^{n\left(  d-1\right)  }\left\Vert
\delta_{2^{n}}\widehat{f_{n}}\right\Vert _{L^{p^{\prime}}\left(  Q\right)
}^{p^{\prime}},
\]
where $\delta_{2^{n}}\widehat{f_{n}}\left(  x\right)  =\widehat{f_{n}}\left(
2^{n}x\right)  $ is supported in $Q$. We also apply $\delta_{t}%
^{\operatorname*{para}}$ to $\mathcal{E}\widehat{f_{n}}$ to obtain%
\[
\mathbb{E}_{2^{\mathcal{G}}}\left\Vert \delta_{2^{-n}}^{\operatorname*{para}%
}\mathcal{E}\widehat{f_{n}}\right\Vert _{L^{q}\left(  \mathbb{R}^{d}\right)
}^{q}=2^{n\left(  d-1\right)  q}\mathbb{E}_{\mathbb{A}}\left\Vert
\mathcal{E}\delta_{2^{n}}\widehat{f_{n}}\right\Vert _{L^{q}\left(
\mathbb{R}^{d}\right)  }^{q}%
\]
and then since $\delta_{2^{n}}\widehat{f_{n}}$ is supported in $Q$, the
probabilistic Fourier extension theorem implies%
\begin{align*}
& \mathbb{E}_{2^{\mathcal{G}}}\left\Vert \delta_{2^{-n}}^{\operatorname*{para}%
}\mathcal{E}\widehat{f_{n}}\right\Vert _{L^{q}\left(  \mathbb{R}^{d}\right)
}=2^{n\left(  d-1\right)  }\mathbb{E}_{2^{\mathcal{G}}}\left\Vert
\mathcal{E}\delta_{2^{n}}\widehat{f_{n}}\right\Vert _{L^{q}\left(
\mathbb{R}^{d}\right)  }\leq C_{d,r,q}2^{n\left(  d-1\right)  }\left\Vert
\delta_{2^{n}}\widehat{f_{n}}\right\Vert _{L^{p^{\prime}}\left(
\mathbb{R}^{d}\right)  }\\
& =C_{d,p,q}2^{n\left(  d-1\right)  }2^{-n\frac{d-1}{p^{\prime}}}\left\Vert
\widehat{f_{n}}\right\Vert _{L^{p^{\prime}}\left(  \mathbb{R}^{d-1}\right)
}\leq C_{d,p,q}2^{n\frac{d-1}{p}}\left\Vert f_{n}\right\Vert _{L^{p}\left(
\mathbb{R}^{d-1}\right)  },
\end{align*}
provided $\frac{1}{p^{\prime}}+\frac{d+1}{d-1}\frac{1}{q}<1$ and $1\leq
p^{\prime}<\frac{2d}{d-1}$.

Thus we have%
\[
\mathbb{E}_{2^{\mathcal{G}}}\left\Vert \mathcal{E}\widehat{f_{n}}\right\Vert
_{L^{q}\left(  \mathbb{R}^{d}\right)  }=2^{-n\frac{d+1}{q}}\mathbb{E}%
_{2^{\mathcal{G}}}\left\Vert \delta_{2^{-n}}^{\operatorname*{para}}%
\mathcal{E}\widehat{f_{n}}\right\Vert _{L^{q}\left(  \mathbb{R}^{d}\right)
}\leq C_{d,p,q}2^{n\left(  \frac{d-1}{p}-\frac{d+1}{q}\right)  }\left\Vert
f_{n}\right\Vert _{L^{p}\left(  \mathbb{R}^{d-1}\right)  },
\]
and as a consequence%
\[
\mathbb{E}_{2^{\mathcal{G}}}\left\Vert \mathcal{E}\widehat{f_{n}}\right\Vert
_{L^{q}\left(  \mathbb{R}^{d}\right)  }\leq C_{d,r,q}2^{n\left(  \frac{d-1}%
{p}-\frac{d+1}{q}\right)  }2^{-n\sigma}\left\Vert f_{n}\right\Vert
_{W^{\sigma,p}\left(  \mathbb{R}^{d-1}\right)  }.
\]
Indeed,%
\[
\left\Vert f_{n}\right\Vert _{W^{\sigma,p}\left(  \mathbb{R}^{d-1}\right)
}=\left\Vert \left(  I-\bigtriangleup\right)  ^{\frac{\sigma}{2}}%
f_{n}\right\Vert _{L^{p}\left(  \mathbb{R}^{d-1}\right)  }\approx2^{n\sigma
}\left\Vert f_{n}\right\Vert _{L^{p}\left(  \mathbb{R}^{d-1}\right)  }%
\]
since
\begin{align*}
\left(  \left(  I-\bigtriangleup\right)  ^{\frac{\sigma}{2}}f_{n}\right)
^{\wedge}\left(  \xi\right)    & =\left(  1+\left\vert \xi\right\vert
^{2}\right)  ^{\frac{\sigma}{2}}\widehat{f_{n}}\left(  \xi\right)  =\left(
1+\left\vert \xi\right\vert ^{2}\right)  ^{\frac{\sigma}{2}}\mathbf{\chi
}_{2^{n}Q\setminus2^{n-1}Q}\mathcal{F}_{d-1}f\left(  \xi\right)  \\
& \approx2^{n\sigma}\mathbf{\chi}_{2^{n}Q\setminus2^{n-1}Q}\mathcal{F}%
_{d-1}f\left(  \xi\right)  \approx2^{n\sigma}\widehat{f_{n}}\left(
\xi\right)
\end{align*}
implies%
\[
\left\Vert \left(  I-\bigtriangleup\right)  ^{\frac{\sigma}{2}}f_{n}%
\right\Vert _{L^{p}\left(  \mathbb{R}^{d-1}\right)  }=\left\Vert
\mathcal{F}_{d-1}\left(  \left(  I-\bigtriangleup\right)  ^{\frac{\sigma}{2}%
}f_{n}\right)  ^{\wedge}\right\Vert _{L^{p}\left(  \mathbb{R}^{d-1}\right)
}\lesssim\left\Vert 2^{n\sigma}f_{n}\right\Vert _{L^{p}\left(  \mathbb{R}%
^{d-1}\right)  }=2^{n\sigma}\left\Vert f_{n}\right\Vert _{L^{p}\left(
\mathbb{R}^{d-1}\right)  },
\]
by Littlewood-Paley theory, and the reverse inequality is an easy exercise.

So now we conclude that
\begin{align*}
& \mathbb{E}_{2^{\mathcal{G}}}\left\Vert u_{f}\right\Vert _{L^{q}\left(
\mathbb{R}^{d}\right)  }=\mathbb{E}_{2^{\mathcal{G}}}\left\Vert \mathcal{E}%
\widehat{f}\right\Vert _{L^{q}\left(  \mathbb{R}^{d}\right)  }=\mathbb{E}%
_{2^{\mathcal{G}}}\left\Vert \sum_{n=0}^{\infty}\mathcal{E}\widehat{f_{n}%
}\right\Vert _{L^{q}\left(  \mathbb{R}^{d}\right)  }\leq\sum_{n=0}^{\infty
}\mathbb{E}_{2^{\mathcal{G}}}\left\Vert \mathcal{E}\widehat{f_{n}}\right\Vert
_{L^{q}\left(  \mathbb{R}^{d}\right)  }\\
& \leq C_{d,p,q}\sum_{n=0}^{\infty}2^{n\left(  \frac{d-1}{p}-\frac{d+1}%
{q}\right)  }2^{-n\sigma}\left\Vert f_{n}\right\Vert _{W^{\sigma,p}\left(
\mathbb{R}^{d-1}\right)  }\\
& \leq C_{d,p,q}\left(  \sum_{n=0}^{\infty}2^{np^{\prime}\left(  \frac{d-1}%
{p}-\frac{d+1}{q}-\frac{\sigma}{r}\right)  }\right)  ^{\frac{1}{p^{\prime}}%
}\left(  \sum_{n=0}^{\infty}2^{-n\frac{\sigma}{r^{\prime}}p}\left\Vert
f_{n}\right\Vert _{W^{p,\sigma}\left(  \mathbb{R}^{d-1}\right)  }^{p}\right)
^{\frac{1}{p}}\\
& \leq C_{d,\sigma,p,q}\left(  \sum_{n=0}^{\infty}2^{-n\frac{\sigma}%
{r^{\prime}}p}\left\Vert \left(  I-\bigtriangleup\right)  ^{\frac{\sigma}{2}%
}f_{n}\right\Vert _{L^{p}\left(  \mathbb{R}^{d-1}\right)  }^{p}\right)
^{\frac{1}{p}},
\end{align*}
which is at most%
\begin{align*}
& C_{d,\sigma,p,q}\left(  \int\sum_{n=0}^{\infty}2^{-n\frac{\sigma}{r^{\prime
}}p}\left\vert \left(  I-\bigtriangleup\right)  ^{\frac{\sigma}{2}}%
f_{n}\left(  \xi\right)  \right\vert ^{p}\right)  ^{\frac{1}{p}}\\
& \leq C_{d,\sigma,p,q}\left(  \int\left(  \sum_{n=0}^{\infty}\left\vert
\left(  I-\bigtriangleup\right)  ^{\frac{\sigma}{2}}f_{n}\left(  \xi\right)
\right\vert ^{2}\right)  ^{\frac{1}{2}}\right)  ^{\frac{1}{p}}\\
& =C_{d,\sigma,p,q}^{\prime}\left(  \int\left(  \sum_{n=0}^{\infty}\left\vert
\mathsf{P}_{n}\left(  I-\bigtriangleup\right)  ^{\frac{\sigma}{2}}f\left(
\xi\right)  \right\vert ^{2}\right)  ^{\frac{1}{2}}\right)  ^{\frac{1}{p}}\\
& \leq C_{d,\sigma,p,q}^{\prime\prime}\left\Vert \left(  I-\bigtriangleup
\right)  ^{\frac{\sigma}{2}}f\right\Vert _{L^{p}\left(  \mathbb{R}^{d}\right)
}=C_{d,\sigma,p,q}^{\prime\prime}\left\Vert f\right\Vert _{W^{\sigma,r}\left(
\mathbb{R}^{d}\right)  },
\end{align*}
where $\mathsf{P}_{n}$ is the Fourier multiplier with symbol $\mathbf{\chi
}_{2^{n}Q\setminus2^{n-1}Q}$, by Littlewood-Paley theory, provided $\frac
{d-1}{p}-\frac{\sigma}{r}<\frac{d+1}{q}$, i.e. $q<\frac{d+1}{\frac{d-1}%
{r}-\frac{\sigma}{r}}=\frac{d+1}{d-1-\frac{p\sigma}{r}}p$. Since the
probabilistic Fourier extension theorem requires $2\leq p^{\prime}<\frac
{2d}{d-1}$ and $\frac{1}{p^{\prime}}+\frac{d+1}{d-1}\frac{1}{q}<1$, i.e.
$\frac{2d}{d+1}<p\leq2$ and $q>\frac{d+1}{d-1}p$, it suffices to take%
\[
\frac{2d}{d+1}<p\leq2\text{ and }\frac{d+1}{d-1}p<q<\frac{d+1}{d-1-\frac
{p\sigma}{r}}p,
\]
for any $r>1$, and this completes the proof of the first part of Theorem
\ref{Thm reg}. The second part is immediate since given
\[
\sigma>0\text{ and }q>\frac{2d}{d-1}<q\leq2\frac{d+1}{d-1-\sigma},
\]
which is not a vacuous condition for $\sigma\geq0$, then
(\ref{index conditions}) holds if we choose $p$ to satisfy%
\[
\max\left\{  \frac{2d}{d+1},q\frac{d-1}{d+1+\frac{q\sigma}{r}}\right\}
<p<q\frac{d-1}{d+1}\text{ and }p\leq2,
\]
which is again not a vacuous condition.
\end{proof}

\section{Comparison with some other results}

Duvan Cardona \cite{Car} obtains the following deterministic estimate,
\begin{align}
& \left\Vert \left\Vert u_{f}\right\Vert _{L_{t}^{s}\left[  0,1\right]
}\right\Vert _{L_{x}^{p^{\prime}}\left(  \mathbb{R}^{n-1}\right)  }%
\lesssim\left\Vert f\right\Vert _{L^{r}\left(  \mathbb{R}^{n-1}\right)
},\label{Car gen}\\
1  & \leq p\leq2\text{, }1\leq s\leq p^{\prime}\text{ and }\frac{1}{p}%
-\frac{1}{2}<\frac{1}{\left(  n-1\right)  s}.\nonumber
\end{align}
The special case $s=p^{\prime}$ is
\begin{align}
& \left\Vert u_{f}\right\Vert _{L\widetilde{^{q}}\left(  \left[  0,1\right]
\times\mathbb{R}^{n-1}\right)  }\lesssim\left\Vert f\right\Vert _{L^{r}\left(
\mathbb{R}^{n-1}\right)  },\label{Car spec}\\
\text{for }\widetilde{q}  & =\frac{r}{r-1}\text{ and }\frac{2n}{n+1}%
<r\leq2,\nonumber
\end{align}
which should be compared with our \emph{average} conjecture,
\begin{align*}
& \mathbb{E}_{\mathbb{A}}\left\Vert u_{f_{\alpha}}\right\Vert _{L^{q}\left(
\mathbb{R}\times\mathbb{R}^{n-1}\right)  }\lesssim\left\Vert f\right\Vert
_{L^{r}\left(  \mathbb{R}^{n-1}\right)  },\\
\text{for }q  & =\frac{n+1}{n-1}r\text{ and }\frac{2n}{n+1}<r\leq2.
\end{align*}

\begin{enumerate}
\item First, note that $\widetilde{q}=q$ if and only if $r=\frac{2n}{n+1}$,
and hence $\widetilde{q}<q$ for $\frac{2n}{n+1}<r\leq2$.

\item Second, Cardona integrates over a substrip of $\mathbb{R}\times
\mathbb{R}^{n-1}$ on the left hand side.

\item Third, if we restrict our integration to the same substrip, and this
with (\ref{Car spec}), then we obtain the average inequality%
\[
\mathbb{E}_{\mathbb{A}}\left\Vert u_{f_{\alpha}}\right\Vert _{L^{q}\left(
\left[  0,1\right]  \times\mathbb{R}^{n-1}\right)  }\lesssim\left\Vert
f\right\Vert _{L^{r}\left(  \mathbb{R}^{n-1}\right)  },
\]
which is a strict improvement of the indices in (\ref{Car spec}), but of
course we only obtain a martingale averaged inequality. In any event, our
result doesn't follow from (\ref{Car gen}).
\end{enumerate}

\end{document}